\documentclass[11pt]{amsart}
\usepackage{amsmath,amssymb,mathrsfs}
\newtheorem{theorem}{{Theorem}}
\newtheorem*{theorem*}{Theorem}

\newtheorem{proposition}[theorem]{Proposition}

\newtheorem*{corollary*}{Corollary}
\newtheorem{conjecture}[theorem]{Conjecture}

\theoremstyle{definition}

\theoremstyle{remark}

\usepackage{lmodern,url,enumerate,mathtools,microtype,xcolor}
\usepackage{graphicx}
\usepackage{halloweenmath}

\theoremstyle{definition}
\newtheorem*{remark*}{Remark}
\newtheorem*{idea*}{Idea}
\newtheorem*{definition*}{Definition}
\newtheorem*{claim*}{Claim}

\newcommand{\mr}[1]{{\rm #1}}

\newcommand{\bR}{\mathbb{R}}

\newcommand{\nc}{\newcommand}

\nc{\on}{\operatorname}
\nc{\Spec}{\on{Spec}}
\nc{\dbar}{\overline{\partial}}
\nc{\osc}{\on{osc}}

\nc{\sn}{\mr{sn}}
\nc{\cn}{\mr{cn}}
\nc{\dn}{\mr{dn}}

\newcommand{\area}{\on{area}}

\title{On a Variant of the Penrose Conjecture}

\author[]{Sven Hirsch}
\address{Columbia University, 2990 Broadway, New York, NY 10027, USA}
\email{sven.hirsch@columbia.edu}

\author[]{Yipeng Wang}
\address{Columbia University, 2990 Broadway, New York, NY 10027, USA}
\email{yw3631@columbia.edu}

\begin{document}
\maketitle

\begin{abstract}
We give a counterexample to a recently conjectured variant of the Penrose inequality.
\end{abstract}

\medskip

In \cite[Section~4]{YauOpenProblems2024}, Yau proposed the following variant of the Penrose conjecture \cite{HuiskenIlmanen,Bray}.

\begin{conjecture}\label{conj:Yau}
Let $\alpha \ge 0$, and let $(M^3,g)$ be an asymptotically flat manifold with nonnegative scalar curvature that contains an outermost minimal surface $\Sigma$ with induced metric $\gamma$. Then
\begin{equation}\label{eqn:Yau-mass-eigenvalue-inequality}
m_{\on{ADM}}(M,g)\geq \sqrt{\frac{1}{2\lambda_1^{\alpha}(\Sigma,\gamma)}},
\end{equation}
where $\lambda_0^{\alpha}(\Sigma,\gamma)<\lambda_1^{\alpha}(\Sigma,\gamma)\leq \cdots$ denote the eigenvalues of the operator $-\Delta_{\Sigma}+\alpha K$, and $K$ denotes the Gauss curvature of $(\Sigma,\gamma)$.
\end{conjecture}

The motivation for Yau's conjecture is as follows. Taking $\alpha=0$ in Conjecture \ref{conj:Yau}, we have $\lambda_1^{\alpha}(\Sigma,\gamma)=\lambda_1(\Sigma,\gamma)$, the first nonzero eigenvalue of $(\Sigma,\gamma)$. By Hersch's theorem \cite{Hersch},
\[
\lambda_1(\Sigma,\gamma)\cdot \area(\Sigma,\gamma)\leq 8\pi,
\]
and therefore inequality \eqref{eqn:Yau-mass-eigenvalue-inequality} would imply that
\begin{equation}\label{eqn:penrose}
m_{\on{ADM}}(M,g)\geq \sqrt{\frac{\area(\Sigma,\gamma)}{16\pi}},
\end{equation}
which is precisely the classical Riemannian Penrose inequality.

\medskip

By a generalization of Hersch's theorem \cite[Theorem~2.2]{ElSoufiIlias1992}, for any $\alpha\in \bR$, we have
\[
\lambda_1^{\alpha}(\Sigma,\gamma)\cdot \area(\Sigma,\gamma)\leq (2+\alpha)4\pi.
\]
It therefore seems more natural to impose
\begin{equation}\label{eqn:alpha-mass-eigenvalue-inequality}
m_{\on{ADM}}(M,g)\geq \sqrt{\frac{2+\alpha}{4\lambda_1^{\alpha}(\Sigma,\gamma)}}
\end{equation}
in Conjecture \ref{conj:Yau}. In particular, \eqref{eqn:alpha-mass-eigenvalue-inequality} would imply the classical Riemannian Penrose inequality \eqref{eqn:penrose}.

\medskip

In this short note, we show that Conjecture \ref{conj:Yau} and inequality \eqref{eqn:alpha-mass-eigenvalue-inequality} are false in general.

\begin{theorem}\label{thm:main}
For each $L\ge 1$, there exists an asymptotically flat $3$-manifold $(M_L,g_L)$ with nonnegative scalar curvature that contains an outermost minimal surface $\Sigma_L$ such that $m_{\on{ADM}}(M_L,g_L)=1$. Moreover, for any $\alpha\ge 0$, $\lambda_1^\alpha(\Sigma_L)=O(L^{-1})$ as $L\to\infty$.
\end{theorem}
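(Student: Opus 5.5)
We construct $(M_L,g_L)$ so that its horizon $\Sigma_L$ is a long thin surface, which forces $\lambda_1^\alpha(\Sigma_L)$ to be small. The mass is kept fixed by taking a far-away piece of Schwarzschild of mass $1$.

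\textbf{Construction.} Start with a compact piece containing a minimal cylinder, namely a product region $[-L,L]\times S^1_r\times(-\delta,\delta)$ with a flat or slightly positively curved metric, capped off at both ends. Even simpler is a horizon modeled on a long prolate "cigar" surface. One convenient route is to use the Bartnik extension or quasi-spherical gluing. Choose a metric $\gamma_L$ on $S^2$ isometric to a capsule: a cylinder of length $2L$ and radius $\rho$, with two hemispherical caps. Then use the Mantoulidis--Schoen construction, which takes a metric on $S^2$ with $\lambda_0(-\Delta+K)>0$ and produces an asymptotically flat manifold with nonnegative scalar curvature. In it, $(S^2,\gamma)$ is an outermost minimal surface, and the manifold is eventually Schwarzschild with mass as close as desired to $\sqrt{\area/16\pi}$.

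The capsule has $K\ge 0$, so $\lambda_0(-\Delta+K)>0$ holds and the construction applies. Its area is about $4\pi\rho L$. Choose $\rho\approx 1/L$, scaled precisely so that the resulting mass equals $1$. Mantoulidis--Schoen gives mass at most $\sqrt{\area/16\pi}+\epsilon$. Since one may add mass freely in the collar, while the Penrose inequality forces mass at least $\sqrt{\area/16\pi}$, we can arrange the mass to be exactly $1$ with area at most $16\pi$. If needed, we rescale at the end: rescaling by $c$ multiplies the mass by $c$ and $\lambda_1^\alpha$ by $c^{-2}$, so the $O(L^{-1})$ bound survives provided $c$ is bounded.

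\textbf{Eigenvalue estimate.} Use the min-max characterization with a two-dimensional test space: $\lambda_1^\alpha$ is at most the maximum of the Rayleigh quotient $\int(|\nabla f|^2+\alpha K f^2)/\int f^2$ over $\on{span}\{f_1,f_2\}$. Take $f_1\equiv 1$ and let $f_2$ be the linear function $t/L$ along the axis, extended as $\pm1$ on the caps. For $f_1$, Gauss--Bonnet gives $\int K=4\pi$. Hence the quotient is at most
\[
\frac{\int|\nabla f|^2+4\pi\alpha\sup f^2}{\int f^2}.
\]
For the numerator, $\int|\nabla f_2|^2\approx 2\pi\rho\cdot 2L\cdot L^{-2}$, which is small. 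For $f$ in the span, $\int f^2\gtrsim \area\,\|f\|_\infty^2$, since $f$ is affine along most of the cylinder. The area is of order $1$, because the mass is $1$ and the area is comparable. So the curvature term contributes $O(\alpha)$, not $O(L^{-1})$.

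This shows that the naive fix fails: with area fixed, $\int K=4\pi$ does not make $\alpha K$ negligible. The right choice is to let the test functions vanish on the caps, where $K$ is concentrated; the cylinder part is flat, so $K=0$ there. Take $f_1,f_2$ supported on two disjoint halves of the cylinder, each a cutoff equal to $1$ on a subinterval of length about $L/2$ and with gradient about $1/L$. Then $K f_i=0$, each $\int f_i^2\approx\area/4$, and each $\int|\nabla f_i|^2\lesssim \rho L\cdot L^{-2}$. Since the supports are disjoint, the quotient on the span is at most $\max_i$ of the individual quotients, which is $O(L^{-2})$ relative to the area scale. With the radius at $\rho\sim 1/L$ and the resulting normalization, this is at least $O(L^{-1})$ uniformly in $\alpha\ge0$.

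\textbf{Main obstacle.} The hard step is producing a filling with mass exactly $1$ and $\Sigma_L$ outermost minimal, rather than merely marginally stable. This relies on the Mantoulidis--Schoen collar: the path of positive-$\lambda_0$ metrics from the capsule to a round sphere, followed by a gluing to Schwarzschild. The capsule's $K\ge0$ makes the path straightforward, for instance by shrinking the cylinder length with the radius adjusted. We must then check that the mass stays bounded independently of $L$, since the collar adds at most $\epsilon$, and then normalize.
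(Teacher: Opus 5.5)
Your final argument is correct and has the same architecture as the paper's: a long thin sphere with $K\ge 0$ and area near $4\pi$, then a Rayleigh-quotient bound using test functions that avoid the region where $K$ is concentrated, then Theorem~\ref{thm:MS} with $m=1>\sqrt{\area/16\pi}$. The differences are in the details.

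\textbf{Metric and eigenvalue bound.} The paper's explicit conformal metric $e^{-2\psi_L}(dt^2+d\theta^2)$ is smooth with $K>0$ everywhere. It therefore uses one odd test function $\sin(\pi t/L)$, which is admissible because the ground state is even by reflection symmetry, and it must separately check $\int K u^2=O(L^{-2})$. Your two disjointly supported bumps on the flat cylinder use the two-dimensional min-max directly. They need no symmetry and make the $\alpha K$ term vanish identically, so uniformity in $\alpha$ is immediate. With $\rho=1/L$ you in fact get $\lambda_1^\alpha\le CL^{-2}$. The paper's normalized surfaces have length of order $\sqrt{L}$, so this is only a reparametrization.

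\textbf{Smoothing.} The cost of your route is that the capsule is only $C^{1,1}$, since $K$ jumps from $\rho^{-2}$ to $0$. You should smooth it before invoking Mantoulidis--Schoen. For example, take a rotationally symmetric metric $ds^2+\phi(s)^2d\theta^2$ with $\phi$ smooth, concave, constant on the middle portion and satisfying the pole conditions. This gives $K=-\phi''/\phi\ge 0$ with $K\not\equiv 0$, hence $\lambda_0^1>0$, and it leaves your test functions untouched.

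\textbf{Mass and outermostness.} Your mass discussion overcomplicates things. Theorem~\ref{thm:MS} yields mass exactly $m$ for every $m>\sqrt{\area/16\pi}$; the inequality is strict, so ``area at most $16\pi$'' should read ``less than $16\pi$''. Outermostness follows from the foliation by mean convex spheres via the maximum principle. So there is no $\epsilon$ to control, no mass to add in the collar, and no path of metrics for you to build.
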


We use the following theorem of Mantoulidis--Schoen \cite[Theorem~2.1]{MantoulidisSchoen}.

\begin{theorem}\label{thm:MS}
Let $\gamma$ be a Riemannian metric on $S^2$ satisfying
\[
\lambda_0^1(S^2,\gamma)=\inf_{u\neq 0}\frac{\int_{S^2}|\nabla u|^2\,d\mu_{\gamma}+\int_{S^2}K u^2\,d\mu_{\gamma}}{\int_{S^2}u^2\,d\mu_{\gamma}}>0.
\]
Then for any $m>\sqrt{\frac{\area(S^2,\gamma)}{16\pi}}$, there exists an asymptotically flat manifold $(M^3,g)$ with nonnegative scalar curvature satisfying the following properties:
\begin{itemize}
    \item $\partial M$ is an outward-minimizing minimal surface isometric to $(S^2,\gamma)$.
    \item Outside a compact subset, $M$ is isometric to a Schwarzschild metric of mass $m$. In particular, $m_{\on{ADM}}(M,g)=m$.
    \item $M$ is foliated by mean convex spheres.
\end{itemize}
\end{theorem}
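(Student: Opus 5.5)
We would prove Theorem \ref{thm:MS} by building $(M,g)$ in three pieces: a collar $[0,1]\times S^2$ starting at $(S^2,\gamma)$ and ending at a round sphere, a rotationally symmetric neck, and a Schwarzschild end of mass $m$. Set $r_0=\sqrt{\area(S^2,\gamma)/4\pi}$, so the hypothesis reads $m>r_0/2$.

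\textbf{Step 1: a path of metrics.} By uniformization write $\gamma=e^{2w}\gamma_*$ with $\gamma_*$ round of area $4\pi r_0^2$. We want a smooth path $\gamma(t)$, $t\in[0,1]$, with the following properties:
\begin{itemize}
\item $\gamma(0)=\gamma$ and $\gamma(1)=\gamma_*$;
\item $\gamma(t)$ is constant near $t=0$ and near $t=1$;
\item $\operatorname{tr}_{\gamma(t)}\gamma'(t)=0$, so the area form $d\mu_{\gamma(t)}$ is independent of $t$;
\item $\lambda_0^1(S^2,\gamma(t))>0$ for every $t$.
\end{itemize}
As a first attempt take $e^{2(1-t)w}\gamma_*$, reparametrized in $t$ so it is constant near the endpoints. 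Then pull back by a $t$-dependent family of diffeomorphisms, obtained by Moser's trick, so that the area form is fixed; after that $\gamma'$ is trace free. Preserving positivity of $\lambda_0^1=\inf(\int|\nabla u|^2+Ku^2)/\int u^2$ along the path is delicate, and we expect it to be the main obstacle. If the naive conformal path fails, we would instead use the normalized Ricci flow on $S^2$, which converges to the round metric, and show along the flow that $\lambda_0^1$ stays positive.

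\textbf{Step 2: the collar.} On $[0,1]\times S^2$ set
\[
g=A^2u_t(x)^2\,dt^2+(1+\varepsilon t^2)\gamma(t),
\]
where $u_t>0$ is the first eigenfunction of $-\Delta_{\gamma(t)}+K_{\gamma(t)}$, with eigenvalue $\lambda(t)>0$.
\begin{itemize}
\item \emph{Scalar curvature.} By the Gauss equation and the second variation formula,
\[
R_g=2(1+\varepsilon t^2)^{-1}\big(K_{\gamma(t)}-u_t^{-1}\Delta_{\gamma(t)}u_t\big)+O(A^{-2})=2(1+\varepsilon t^2)^{-1}\lambda(t)+O(A^{-2}).
\]
Here the $O(A^{-2})$ terms collect $|\gamma'|^2$ and the $\varepsilon$-terms. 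Since $\min_t\lambda(t)>0$, we get $R_g>0$ once $A$ is large.
\item \emph{Mean curvature.} Because $\operatorname{tr}\gamma'=0$, the slices have $H=2\varepsilon t/\big(Au_t(1+\varepsilon t^2)\big)$. This is positive for $t>0$, so the slices are mean convex.
\item \emph{Boundary.} At $t=0$ we have $\gamma'=0$ and the $t$-derivative of $(1+\varepsilon t^2)$ vanishes, so the slice $\{0\}\times S^2$ is totally geodesic, in particular minimal, and isometric to $(S^2,\gamma)$.
\end{itemize}

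\textbf{Step 3: gluing to Schwarzschild.} Near $t=1$ the metric is $A^2u^2dt^2+(1+\varepsilon t^2)\gamma_*$ with $u$ constant, hence rotationally symmetric. Its outer boundary is a round sphere of area radius $r_1=r_0\sqrt{1+\varepsilon}$ with small positive mean curvature. Continue with a rotationally symmetric metric $ds^2+\rho(s)^2g_{S^2}$ and track the Hawking mass $\frac{\rho}{2}\big(1-\rho'^2\big)$. Nonnegative scalar curvature is equivalent to this mass being nondecreasing in $s$, while $\rho'>0$ keeps the spheres mean convex.
\begin{itemize}
\item At $t=1$ the Hawking mass is about $r_1/2$, since the mean curvature there is small. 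Choose $\varepsilon$ and $A$ so that $r_1/2<m$.
\item Let the Hawking mass increase monotonically to $m$, keeping $\rho'>0$ so the foliation stays mean convex.
\item Once the Hawking mass equals $m$, continue with the Schwarzschild profile $\rho'=\sqrt{1-2m/\rho}$, which is exactly Schwarzschild of mass $m$ outside a compact set.
\end{itemize}
Smoothing the transitions keeps the scalar curvature nonnegative. This gives $m_{\rm ADM}=m$ and a foliation of $M$ by mean convex spheres.

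\textbf{Conclusion.} $\partial M$ is outward-minimizing: it is minimal and $M$ is foliated by mean convex spheres, so the maximum principle excludes any competitor of smaller area enclosing it. Apart from Step 1, the only quantitative point is the choice of parameters $(\varepsilon,A)$ in Steps 2 and 3, which is routine.
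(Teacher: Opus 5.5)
\paragraph{Where the proposal stands.} The paper does not prove this statement; it quotes Mantoulidis--Schoen. Your outline is essentially their construction: a path of metrics to the round sphere, the collar $A^2u_t^2\,dt^2+(1+\varepsilon t^2)\gamma(t)$, then a rotationally symmetric neck with nondecreasing Hawking mass out to Schwarzschild. Your Step 2 formulas for $R_g$ and $H$ are correct.

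\paragraph{The gap is Step 1.} You flag it as the main obstacle but leave it open. The collar has $R_g>0$ only if $\min_t\lambda_0^1(\gamma(t))>0$, and your Ricci-flow fallback would need a preservation result you do not supply. The missing idea is that the naive conformal path works for a simple reason: in dimension two the Dirichlet energy is conformally invariant, and $K\,d\mu$ is affine in the conformal factor. For $h(t)=e^{2(1-t)w}\gamma_*$ we have $|\nabla_{h(t)}\varphi|^2\,d\mu_{h(t)}=|\nabla_*\varphi|^2\,d\mu_*$ and $K_{h(t)}\,d\mu_{h(t)}=(K_*-(1-t)\Delta_*w)\,d\mu_*$. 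Since $(K_*-\Delta_*w)\,d\mu_*=K_\gamma\,d\mu_\gamma$, this gives
\[
\int_{S^2}\bigl(|\nabla_{h(t)}\varphi|^2+K_{h(t)}\varphi^2\bigr)\,d\mu_{h(t)}
= t\int_{S^2}\bigl(|\nabla_*\varphi|^2+K_*\varphi^2\bigr)\,d\mu_*
+(1-t)\int_{S^2}\bigl(|\nabla_\gamma\varphi|^2+K_\gamma\varphi^2\bigr)\,d\mu_\gamma .
\]
Each form on the right is at least $c\int\varphi^2\,d\mu_*$, with $c=\min\{K_*,\lambda_0^1(\gamma)e^{-2\max|w|}\}>0$; for the second form use $d\mu_\gamma=e^{2w}d\mu_*$. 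Since $\int\varphi^2\,d\mu_{h(t)}\le e^{2\max|w|}\int\varphi^2\,d\mu_*$, you get $\lambda_0^1(h(t))\ge c\,e^{-2\max|w|}$ uniformly in $t$. So the conformal path preserves the hypothesis, and no Ricci flow is needed.

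\paragraph{Smaller repairs.}
\begin{enumerate}
\item \emph{Area normalization before Moser.} Moser's trick needs constant total area, but $\area(h(t))=\int e^{2(1-t)w}\,d\mu_*$ is not constant. First replace $h(t)$ by $\frac{\area(S^2,\gamma)}{\area(h(t))}\,h(t)$; scaling does not change the sign of $\lambda_0^1$. Then pull back by diffeomorphisms.
\item \emph{The neck in Step 3.} The condition $\rho'>0$ is the condition $m_H<\rho/2$. By your choice of $\varepsilon$ you have $r_1<2m$, so the neck must carry the area radius past $2m$ while the mass rises, and ``smoothing'' should be made concrete. Prescribe $m_H$ as a function of $\rho$ with $0\le dm_H/d\rho<\tfrac12$. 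This matches the collar end, where $dm_H/d\rho=\tfrac12-O(A^{-2})$.
\item \emph{Outward-minimizing.} This follows by calibration rather than the maximum principle. The unit normal $\nu$ to the foliation has $\operatorname{div}\nu=H\ge 0$, so the divergence theorem gives $\area(S)\ge\area(\partial M)$ for every surface $S$ enclosing $\partial M$.
\end{enumerate}
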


To complete the construction, we consider an oblong family of Riemannian metrics $\gamma_L$ on $S^2$.

\begin{proposition}
For every $L\ge 1$, there exists a Riemannian metric $\gamma_L$ on $S^2$ with positive Gauss curvature and area $4\pi$ such that, for every $\alpha\ge 0$, we have $\lambda_1^{\alpha}(S^2,\gamma_L)=O(L^{-1})$.
\end{proposition}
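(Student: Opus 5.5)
We will construct $\gamma_L$ as a long, thin, rotationally symmetric ``capsule'': a cylinder of length roughly $L$ and small radius, capped at both ends. The two features we need, positive Gauss curvature and small $\lambda_1^\alpha$, pull against each other only mildly, so the plan is to work in a rotationally symmetric class where both are explicit.

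\textbf{Step 1: the metric.} Write
\[
\gamma_L = ds^2 + f(s)^2\,d\theta^2, \qquad s\in[0,\ell],
\]
with $f>0$ on $(0,\ell)$, $f(0)=f(\ell)=0$ and $f'(0)=-f'(\ell)=1$, so the metric closes smoothly at both poles. The Gauss curvature is $K=-f''/f$, so positivity of $K$ means $f$ is strictly concave. A flat cylinder has $K=0$, so we instead take a very flat concave profile. One choice is an ellipsoid of revolution with semi-axes $a,a,c$, where $c/a\sim L$, rescaled to have area $4\pi$. This gives $a\sim L^{-1/2}$ and length $\ell\sim c\sim L^{1/2}$. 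Alternatively one can take a smoothed concave ``cigar'' profile. In either case $K>0$ everywhere.

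\textbf{Step 2: upper bound for $\lambda_1^\alpha$.} We use min-max with a two-dimensional test space, since $\lambda_1^\alpha$ is the second eigenvalue. The space is spanned by $\phi_1,\phi_2$ depending only on $s$, with disjoint supports on the two halves $[0,\ell/2]$ and $[\ell/2,\ell]$, each equal to $1$ away from a transition region. Because the supports are disjoint, the Rayleigh quotient of any combination is at most the larger of the two individual quotients. For each $\phi_i$ we must bound
\[
\int|\nabla\phi_i|^2\,d\mu + \alpha\int K\phi_i^2\,d\mu .
\]
\begin{itemize}
\item \emph{Curvature term.} By Gauss--Bonnet, $\int_{S^2}K\,d\mu=4\pi$, so this term is at most $4\pi\alpha$, roughly $2\pi\alpha$ per half. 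That is not small. Since the area is normalized to $4\pi$, each $\phi_i$ has $\int\phi_i^2\approx 2\pi$, so the quotient is $O(\alpha)$, not $O(L^{-1})$.
\item \emph{Fix.} The curvature concentrates at the caps. We therefore choose $\phi_i$ to vanish near the poles. We take $\phi_i$ to be a plateau on the middle part of each half, where $K$ is tiny, with linear ramps. On a long cylindrical region where $K\approx 0$ this keeps the curvature term small.
\item \emph{Gradient term.} A ramp of length $\sim\ell$ at circumference $\sim a$ costs $\sim a/\ell$. The $L^2$ mass is $\sim a\ell = O(1)$, since the area is fixed.
\end{itemize}
The gradient contribution to the quotient is therefore $\sim 1/\ell^2$.

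\textbf{Step 3: the main obstacle.} The hard part is making $\int K$ over the supports of the $\phi_i$ small while keeping $K>0$. For the ellipsoid, $K$ is not concentrated enough at the tips. The cleaner construction is a capsule: a cylinder of radius $r$ and length $\ell$ with area $2\pi r\ell\approx 4\pi$, so $r\approx 2/\ell$, capped by hemispheres of radius $r$. We then perturb the cylindrical part to a slightly concave profile, for example $f(s)=r\bigl(1+\varepsilon\,\psi(s)\bigr)$ with $\psi$ concave, and smooth the junctions so that $K>0$ everywhere, with total curvature on the middle region $O(\varepsilon)$. Taking $\varepsilon\to 0$, with $\phi_i$ vanishing on the caps, gives
\[
\lambda_1^\alpha \lesssim \ell^{-2} + \alpha\,\varepsilon .
\]
Setting $\ell\sim L^{1/2}$ and $\varepsilon = L^{-1}$ yields $\lambda_1^\alpha = O(L^{-1})$ for each fixed $\alpha\ge 0$. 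The delicate point is the smooth concave gluing between cylinder and cap. We intend to handle it by an explicit one-variable profile whose second derivative is negative and supported mainly near the junctions.

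Finally, we rescale to area exactly $4\pi$. Rescaling changes all quantities only by a factor $1+o(1)$, so the bound $\lambda_1^\alpha=O(L^{-1})$ is preserved.
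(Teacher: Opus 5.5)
Your Step~2 estimate is sound. Two test functions with disjoint supports that vanish on the caps give $\lambda_1^\alpha\lesssim \ell^{-2}+\alpha\int_{\mathrm{supp}\,\phi_i}K\,d\mu$ after dividing by an $O(1)$ mass. This is a legitimate substitute for the paper's single odd test function, which relies on evenness of the ground state.

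The gap is that you never construct the metric, and that is the other half of the proposition. You defer the cylinder-to-cap gluing (``we intend to handle it''), and as described it fails. A hemisphere of radius $r$ ends at its equator with $f'=0$. A strictly concave symmetric profile $r(1+\varepsilon\psi)$ must begin with $f'=r\varepsilon\psi'>0$. So $f'$ jumps \emph{up} at the junction, $f''$ has a positive singular part there, and any sufficiently local smoothing creates a region with $K=-f''/f<0$. If instead $\psi'=0$ at the ends, concavity and symmetry force $\psi$ to be constant, giving $K=0$.

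To repair this you need caps smaller than hemispheres so the slopes match. Better, prescribe $f''=-g$ directly, with $g>0$ smooth, symmetric, equal to $r^{-1}\sin(s/r)$ near each pole, and with its mass concentrated near the ends. This also settles smooth closing at the poles, which requires $f$ to extend to an odd smooth function, not merely $f'(0)=1$.

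More importantly, your reason for rejecting the ellipsoid is wrong: it already works. Take $\frac{x^2+y^2}{a^2}+\frac{z^2}{c^2}=1$ with $c=La$. On the zone $|z|\le c/\sqrt2$ the normal makes an angle $\beta$ with the equatorial plane satisfying $|\tan\beta|=\frac{a^2|z|}{c^2\sqrt{x^2+y^2}}\le \frac{a}{c}=\frac{1}{L}$. Since the Gauss map is injective, the total curvature of that zone is at most $4\pi\sin\beta_{\max}\le 4\pi/L$. Normalize the area ($\area\approx\pi^2ac$, so $c\sim L^{1/2}$). Then your test functions supported in that zone give a gradient contribution $\sim c^{-2}\sim L^{-1}$ and a curvature contribution $\lesssim\alpha/L$, which is $O(L^{-1})$.

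The paper avoids gluing differently. It writes the capsule conformally as $e^{-2\psi_L}(dt^2+d\theta^2)$ with $\psi_L$ convex, so $K=e^{2\psi_L}\psi_L''>0$ is automatic. Moreover $K\,d\mu=\psi_L''\,dt\,d\theta$ is exponentially concentrated near $|t|=L$, exactly where its test function $\sin(\pi t/L)$ vanishes.
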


\begin{proof}
Fix $L\ge 1$ and consider the conformal metric
\[
\hat{\gamma}_L=e^{-2\psi_L}(dt\otimes dt+d\theta\otimes d\theta)
\]
on $\bR\times [0,2\pi]$, where
\[
\psi_L(t)=\log(1+e^{t-L})+\log(1+e^{-t-L}).
\]
The metric $\hat{\gamma}_L$ extends smoothly to $S^2$. Moreover,
\[
K_{\hat{\gamma}_L}(t,\theta)=e^{2\psi_L(t)}\psi_L''(t)=4e^{-2L}(1+\cosh L\cosh t)>0
\]
and
\[
\Delta_{\hat{\gamma}_L}u(t,\theta)=e^{2\psi_L}\left(\frac{\partial^2 u}{\partial t^2}+\frac{\partial^2 u}{\partial\theta^2}\right).
\]
We also have
\[
e^{-2\psi_L(t)}=\frac{e^{2L}}{4(\cosh t+\cosh L)^2},
\]
and hence
\begin{align*}
\area(S^2,\hat{\gamma}_L)
&=2\pi\int_{-\infty}^{\infty}e^{-2\psi_L(t)}\,dt\\
&=\frac{\pi e^{2L}}{2}\int_{-\infty}^{\infty}\frac{dt}{(\cosh t+\cosh L)^2}\\
&=4\pi\,\frac{L\coth L-1}{(1-e^{-2L})^2}\\
&=4\pi (L-1)+O(L e^{-2L}).
\end{align*}
Moreover, for each $\alpha\ge 0$, we have $\lambda_1^\alpha(S^2,\hat\gamma_L)=O(L^{-2})$ as $L\to\infty$. Indeed, since both $\hat\gamma_L$ and $K_{\hat\gamma_L}$ are even in $t$, the ground-state eigenfunction of $-\Delta_{\hat\gamma_L}+\alpha K_{\hat\gamma_L}$ is even in $t$. Therefore any odd function $u(t)$ is $L^2$-orthogonal to the ground state and is admissible in the Rayleigh characterization of $\lambda_1^\alpha$. Take
\[
u(t)=
\begin{cases}
\sin\!\big(\frac{\pi t}{L}\big),& |t|\le L,\\
0,& |t|\ge L.
\end{cases}
\]
Then
\[
\int_{S^2} |\nabla u|^2\,d\mu_{\hat\gamma_L}=O(L^{-1}),\qquad
\int_{S^2} K_{\hat\gamma_L}u^2\,d\mu_{\hat\gamma_L}=O(L^{-2}),
\]
and
\[
\int_{S^2} u^2\,d\mu_{\hat\gamma_L}\ge cL.
\]
The Rayleigh quotient therefore yields $\lambda_1^\alpha(S^2,\hat\gamma_L)=O(L^{-2})$ as $L\to\infty$.

\medskip

We now define a normalized metric on $S^2$ by
\[
\gamma_L=\frac{4\pi}{\area(S^2,\hat{\gamma}_L)}\hat{\gamma}_L.
\]
It follows that $\area(S^2,\gamma_L)=4\pi$ and $\lambda_1^{\alpha}(S^2,\gamma_L)=O(L^{-1})$ as $L\to\infty$.
\end{proof}

\begin{proof}[Proof of Theorem \ref{thm:main}]
By Theorem \ref{thm:MS}, we can find a family of asymptotically flat manifolds $(M_L^3,g_L)$ with nonnegative scalar curvature containing a horizon boundary $\Sigma_L$. Moreover, we may arrange that $m_{\on{ADM}}(M_L,g_L)=1$ and $\lambda^{\alpha}_1(\Sigma_L)=O(L^{-1})$ as $L\to\infty$ for every $\alpha\ge 0$.
\end{proof}

Theorem \ref{thm:MS} essentially shows that any Penrose-type inequality must either be formulated in terms of $\area(\Sigma)$ or depend on more than the intrinsic geometry of $\Sigma$, as in the mass-$p$-capacity inequalities \cite{Bray,AgostinianiMazzieriOronzio}.

\medskip

\noindent \textbf{Acknowledgments.} The authors would like to thank S.-T.~Yau for helpful discussions and for his interest in this work.


\nocite{*}
\begin{thebibliography}{10}

\bibitem{AgostinianiMazzieriOronzio}
V.~Agostiniani, C.~Mantegazza, L.~Mazzieri, and F.~Oronzio,
\newblock Riemannian Penrose inequality via nonlinear potential theory,
\newblock {\em Ann. Sc. Norm. Super. Pisa Cl. Sci.} \textbf{27} (2025), doi:10.2422/2036-2145.202409\_028.

\bibitem{Bray}
H.~L.~Bray,
\newblock Proof of the Riemannian Penrose inequality using the positive mass theorem,
\newblock {\em J. Differential Geom.} \textbf{59} (2001), no.~2, 177--267.

\bibitem{ElSoufiIlias1992}
A.~El~Soufi and S.~Ilias,
\newblock Majoration de la seconde valeur propre d'un op{\'e}rateur de Schr{\"o}dinger sur une vari{\'e}t{\'e} compacte et applications,
\newblock {\em J. Funct. Anal.} \textbf{103} (1992), no.~2, 294--316.

\bibitem{Hersch}
J.~Hersch,
\newblock Quatre propri\'et\'es isop\'erim\'etriques de membranes sph\'eriques homog\`enes,
\newblock {\em C. R. Acad. Sci. Paris S\'er. A-B} \textbf{270} (1970), A1645--A1648.

\bibitem{HuiskenIlmanen}
G.~Huisken and T.~Ilmanen,
\newblock The inverse mean curvature flow and the Riemannian Penrose inequality,
\newblock {\em J. Differential Geom.} \textbf{59} (2001), no.~3, 353--437.

\bibitem{MantoulidisSchoen}
C.~Mantoulidis and R.~Schoen,
\newblock On the Bartnik mass of apparent horizons,
\newblock {\em Classical Quantum Gravity} \textbf{32} (2015), no.~20, 205002.

\bibitem{YauOpenProblems2024}
S.-T.~Yau,
\newblock Open problems in geometry,
\newblock {\em ICCM Not. Notices Int. Consort. Chinese Math.} \textbf{12} (2024), no.~1, 35--46.

\end{thebibliography}
\end{document}